\newtheorem{corollary}{Corollary}
\newtheorem{lemma}{Lemma}
\newtheorem{definition}{Definition}%
\newcommand{\F}{\mathbb{F}}
\newcommand{\K}{\mathcal{K}}
\newcommand{\G}{\mathcal{G}}
\title{When Arcs Extend Uniquely: A Higher-Dimensional Generalization of Barlotti’s Result }
\author{Tim L. Alderson}
\begin{document}

\maketitle

\begin{abstract}
	In this short communication, we generalize a classical result of Barlotti concerning the unique extendability of arcs in the projective plane to higher-dimensional projective spaces. Specifically, we show that for integers \( k \ge 3 \), \( s \ge 0 \), and prime power \( q \), any \((n, k + s - 1)\)-arc in PG\((k - 1, q)\) of size \( n = (s+1)(q+1) + k - 3 \) admits a unique extension to a maximal arc, provided \( s + 2 \mid q \) and \( s < q - 2 \). This result extends the classical characterizations of maximal arcs in PG\((2,q)\) and connects naturally to the theory of A$^s$MDS codes. Our findings establish conditions under which linear codes of given dimension and Singleton defect can be uniquely extended to maximal-length projective codes.
\end{abstract}

\textit{MSC2010:} Primary: 94B65; Secondary: 94B25, 94B27

\textit{Keywords:} $A^{s}$MDS codes; $(n,r)$-arcs; maximal arcs; code extensions; singleton bound; singleton defect

\section*{Introduction}

The study of arcs in finite projective spaces has long stood at the intersection of finite geometry and coding theory. In the projective plane PG\((2,q)\), an arc is a set of points with the property that no three are collinear, and such a set of size \( q + 1 \) is called an oval. A classical result of Segre  [\cite{MR0075608} 1955] shows that every arc of size \( q+1 \) in PG\((2,q) \) lies on a conic, and that such arcs admit an unique extension to an arc of size \( q + 2 \) (a hyperoval) if and only if \( q \) is even. This phenomenon of unique extendability, tightly constrained by the parity of \( q \), initiated a rich line of inquiry into extremal configurations in projective geometries.

In 1956, Barlotti \cite{MR0083141} extended Segre’s result by proving that \((n,r)\)-arcs in PG\((2,q)\) of size \( n = (s+1)(q+1) \), where \( s = r - 2 \), also exhibit a unique extension property, again contingent on the divisibility condition \( s+2 \mid q \). His result laid the groundwork for a connection to extremal coding theory, as such arcs correspond to $3$-dimensional linear codes with  Singleton defect $s$.

In this note, we build on this foundation by generalizing Barlotti’s unique extension result to projective spaces of arbitrary dimension. Specifically, we prove that for \( k \ge 3 \), any \((n, k+s-1)\)-arc in PG\((k - 1, q)\) of length \( n = (s+1)(q+1) + k - 3 \) admits a unique extension to a maximal arc, provided \( s + 2 \mid q \) and \( s < q - 2 \). This result bridges a gap, showing that unique extendability previously known in the planar case persists in higher-dimensional projective geometries under analogous conditions.

Geometrically, the proof relies on a quotient construction, exploiting the behaviour of tangents and secants under projection. 


\section*{Projective Geometry and Quotient Geometries}

The projective space of dimension \( k-1 \) over \( \F_q \) is denoted by PG\((k - 1, q)\). A \emph{flat} in PG\((k - 1, q)\) is a projective subspace; specifically, a \( d \)-flat refers to a subspace of projective dimension \( d \). Flats of codimension 1 are called \emph{hyperplanes}.

Given a flat \( \Lambda \) of PG\((k - 1, q)\), the quotient geometry with respect to \( \Lambda \), is (isomorphic to)   PG\((k - \dim(\Lambda) - 2, q)\). It is defined by associating  points with the \((\dim(\Lambda)+1)\)-flats in PG\((k - 1, q)\) that contain \( \Lambda \), and higher-dimensional flats with those containing \( \Lambda \).
Incidence in the quotient geometry is inherited from containment in the original projective space: a point (i.e., \((\dim(\Lambda)+1)\)-flat) lies in a flat of the quotient if the corresponding flat in PG\((k - 1, q)\) is contained in the corresponding larger flat containing \( \Lambda \).

\section*{Arcs and Multi-Arcs in Projective Space}

An \emph{\((n, r)\)-arc} \( \K \) in PG\((k - 1, q)\), where \( r \ge k - 1 \), is a set of \( n>r \) points such that:
\begin{itemize}
	\item every hyperplane intersects \( \K \) in at most \( r \) points, and
	\item there exists at least one hyperplane that meets \( \K \) in exactly \( r \) points.
\end{itemize}

If \( \K \) is a multiset — that is, some points occur with multiplicity greater than one — it is called a \emph{multi-arc}.

An arc is called \emph{complete} if it cannot be extended by adding another point without violating the arc condition. That is, no additional point can be included to form an \((n+1, r)\)-arc in PG\((k - 1, q)\).

Since each hyperplane is a \( (k - 2) \)-flat, the arc condition requires that \( r \ge k - 1 \). Thus, we may write \( r = k - 1 + s \), where \( s \ge 0 \) is called the \emph{defect parameter}.

Relative to an $(n, k-1+s)$-arc $\mathcal{K}$  in $\mathrm{PG}(k-1,q)$, hyperplanes are classified as \emph{secant}, \emph{tangent}, or \emph{external} according to whether they contain $k-1+s$, a positive number fewer than $k-1+s$, or zero points of $\mathcal{K}$, respectively.

\section*{Maximal Arcs and the Function \( m^s(k, q) \)}

For a given defect parameter \( s \ge 0 \), a simple counting argument  shows that an \((n, k + s - 1)\)-arc in PG\((k - 1, q)\) satisfies the inequality:
 
\begin{equation} \label{eqn: maximal arc bound}
	n \le (s+1)(q+1) + k - 2.
\end{equation}
 
An arc that meets this bound with equality is called a \emph{maximal arc}. These arcs are necessarily complete.

We define \( m^s(k, q) \) to denote the maximum size of a complete \((n, k + s - 1)\)-arc in PG\((k - 1, q)\). That is,
\[
m^s(k, q) := \max \{ n \mid \text{there exists a complete } (n, k + s - 1)\text{-arc in PG}(k - 1, q) \}.
\]
In particular, the inequality (\ref{eqn: maximal arc bound}) provides $m^s(k,q)\le (s+1)(q+1)+k-2$. 
As we shall see, the function $m^s(k,q)$ captures the largest possible length of a linear $q$-nary A$^s$MDS code of dimension $k$.

\section*{Maximal Arcs in the Plane}

In the special case of the projective plane PG\((2, q)\), several classical results establish precise bounds for maximal arcs, particularly in relation to the divisibility of \( q \) by \( s + 2 \). The results of Barlotti (1956) \cite{MR0083141}, when paired with the later developments of Ball et. al. \cite{MR1466573}, give bounds on $(n,r)$-arcs in PG$(2,q)$ which provide the first two items in the following Lemma. The third item is due to the construction of Denniston \cite{MR0239991}.

\begin{lemma}[\cite{MR0083141},\cite{MR1466573}, \cite{MR0239991}]  \label{lem: Barlotti 3d}  \leavevmode
	\begin{enumerate}
		\item If \( 0 < s < q - 2 \), and \( (s+2,q) \ne (2^e,2^h) \), then \( m^s(3,q) \le (s+1)(q+1)-1 \).
		\item If \( 0 < s < q - 2 \), \( (s+2) \mid q \), and \( m^s(3,q) \ge (s+1)(q+1) \), then \( m^s(3,q) = (s+1)(q+1)+1 \).
		\item If \( 0 < s \le q - 2 \), and \( (s+2,q) = (2^e,2^h) \), then \( m^s(3,q) = (s+1)(q+1)+1 \).
	\end{enumerate}
\end{lemma}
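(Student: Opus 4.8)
The plan is to reduce all three statements to the theory of the line-type spectrum of an \((n, s+2)\)-arc \(\K\) in PG\((2,q)\) (note \(k=3\) forces \(r = k-1+s = s+2\)) and then to invoke, in turn, a counting argument for the upper bound and extension, the polynomial method for the parity obstruction, and an explicit construction for existence. I would begin by recording the three standard equations for the numbers \(t_i\) of lines meeting \(\K\) in exactly \(i\) points:
\[
\sum_i t_i = q^2+q+1,\qquad \sum_i i\,t_i = n(q+1),\qquad \sum_i \binom{i}{2} t_i = \binom{n}{2}.
\]
These encode the total number of lines, the point--line incidences, and the pairs of points of \(\K\). Since every line carries at most \(s+2\) points, only \(t_0,\dots,t_{s+2}\) are nonzero, and when \(n\) is near the bound \((s+1)(q+1)+1\) of (\ref{eqn: maximal arc bound}) these relations become very restrictive.

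For item (2) I would run Barlotti's concurrency argument. Fix a point \(P \in \K\). The \(q+1\) lines through \(P\) cover the remaining \(n-1 = (s+1)(q+1)-1\) points, and since each carries at most \(s+1\) further points of \(\K\), the total deficiency from \((s+1)(q+1)\) is exactly \(1\); hence through every point of \(\K\) there is precisely one line meeting \(\K\) in \(s+1\) points, all others being secants of size \(s+2\), and no line of intermediate size can occur. A short double count then shows there are exactly \(q+1\) such deficient lines. The crux is to prove that they are \emph{concurrent}: the incidence count between deficient lines and points off \(\K\) is consistent with a single common point \(P_0\), and pinning this down is where the hypothesis \(s+2 \mid q\) is used to rule out a spread-out configuration. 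Adjoining \(P_0\) makes every line meet \(\K \cup \{P_0\}\) in \(0\) or \(s+2\) points, producing a maximal arc of size \((s+1)(q+1)+1\); uniqueness is automatic, since \(P_0\) is forced as the common point of the deficient lines. This simultaneously yields item (1): the excluded case \((s+2,q)=(2^e,2^h)\) is exactly the one in which a maximal arc of degree \(s+2\) might exist, and outside it either \(q\) is odd, so \cite{MR1466573} rules out all maximal arcs of degree \(1 < s+2 < q\), or \(q\) is even while \(s+2\) is not a power of \(2\), whence \(s+2 \nmid q\) and Barlotti's divisibility condition forbids one. In both cases no maximal arc exists; since any arc of size \((s+1)(q+1)\) would complete to one, arcs of that size cannot occur either, forcing \(m^s(3,q) \le (s+1)(q+1)-1\).

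The main obstacle is the parity hypothesis, i.e.\ showing that a maximal arc of degree \(s+2\) with \(1 < s+2 < q\) cannot exist unless both \(s+2\) and \(q\) are powers of \(2\); this is the content of \cite{MR1466573} and is what excludes the exceptional case in item (1). I would attack it with the polynomial method: associate to a hypothetical maximal arc the directions of its secants through a chosen point and encode them by a lacunary (Rédei-type) polynomial over \(\F_q\), then analyze the degree and divisibility of its derivative. In odd characteristic the resulting identities are contradictory, whereas in even characteristic the extra vanishing coming from the Frobenius map \(x \mapsto x^2\) removes the contradiction --- precisely where \((s+2,q)=(2^e,2^h)\) enters. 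Making this bookkeeping precise, in particular controlling the intersection numbers modulo the characteristic, is the technically delicate part of the whole argument.

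Finally, for item (3) I would exhibit Denniston's arcs directly. Taking \(q = 2^h\) and \(s+2 = 2^e \mid q\), consider a pencil of conics with no common point over \(\F_q\), say the curves \(x^2 + xy + \beta y^2 = \lambda z^2\) with \(x^2+xy+\beta y^2\) irreducible, and form the union over an additive subgroup \(A\) of \((\F_q,+)\) of order \(2^e\). In even characteristic the additive structure of \(A\), together with the partition of the plane by the pencil, makes every line meet this union in \(0\) or \(2^e\) points, so it is a maximal arc of degree \(s+2\) and hence of size \((s+1)(q+1)+1\). Combined with the upper bound (\ref{eqn: maximal arc bound}), this gives \(m^s(3,q) = (s+1)(q+1)+1\) exactly, completing the Lemma.
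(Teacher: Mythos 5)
First, a point of comparison: the paper does not actually prove this lemma; it is quoted as known background, with items (1)--(2) attributed to Barlotti and Ball--Blokhuis--Mazzocca and item (3) to Denniston. So your reconstruction is measured against the literature. Your skeleton --- line-type counting plus Barlotti's concurrency argument, the Ball--Blokhuis--Mazzocca non-existence theorem for $q$ odd, and Denniston's construction --- is the correct set of ingredients, and your outlines of items (2) and (3) are essentially sound. One caveat on item (2): the concurrency of the $q+1$ deficient $(s+1)$-secants, which you correctly identify as the crux, is asserted rather than proved. The actual mechanism is: for $Q \notin \K$, the number $a_Q$ of $(s+1)$-secants through $Q$ satisfies $a_Q \equiv q+1 \equiv 1 \pmod{s+2}$ (this is where $(s+2)\mid q$ enters), so $a_Q = 1+(s+2)b_Q$ with $b_Q \ge 0$; the two counts $\sum_Q (a_Q-1) = q$ and $\sum_Q \binom{a_Q}{2} = \binom{q+1}{2}$ then give $\sum_Q b_Q = q/(s+2)$ and $\sum_Q b_Q^2 = \left(q/(s+2)\right)^2$, and equality of $\sum b_Q^2$ with $\left(\sum b_Q\right)^2$ for non-negative integers forces exactly one nonzero $b_Q$, i.e.\ a single point on all $q+1$ deficient lines.

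The genuine gap is in item (1). You deduce $m^s(3,q) \le (s+1)(q+1)-1$ from ``no maximal arc exists'' via the claim that \emph{any arc of size $(s+1)(q+1)$ would complete to a maximal arc}. But that completion is exactly the extension property of item (2), which --- as your own write-up of item (2) makes explicit --- requires $(s+2)\mid q$: the congruence $a_Q \equiv 1 \pmod{s+2}$, hence $a_Q \ge 1$ for every point off the arc, is what drives the concurrency, and it fails when $(s+2)\nmid q$. Item (1), however, covers precisely many cases with $(s+2)\nmid q$ (every odd $q$ with $(s+2)\nmid q$, and every even $q$ with $s+2$ not a power of $2$), and there your chain of implications is circular/unsupported. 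What is needed in that regime is Barlotti's \emph{separate} theorem that an $(n,r)$-arc with $2 < r < q$ and $r \nmid q$ satisfies $n \le (r-1)q + r - 2$, i.e.\ arcs of size $(s+1)(q+1)$ do not exist at all; this has its own proof, not reducible to the extension lemma. For instance: such an arc would have only $0$-, $(s+1)$- and $(s+2)$-secants, with exactly $q+1$ of the $(s+1)$-secants; if $q+1 \not\equiv 0,1 \pmod{s+2}$ then every off-arc point lies on at least two $(s+1)$-secants, while counting incidences of the $q+1$ $(s+1)$-secants with the $q-s-1$ off-arc points of a fixed $(s+2)$-secant gives $\sum a_Q = q-s-1$, a contradiction since $s < q-2$; the remaining case $(s+2)\mid q+1$ falls to a variant of the $\sum b_Q^2 < \sum b_Q$ count. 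Without this (or some equivalent), your proposal establishes item (1) only in the subcase $q$ odd with $(s+2)\mid q$.
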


Barlotti's foundational result establishes the unique extendability of certain arcs in the plane:

\begin{lemma}[\cite{MR0083141}]\label{lem Barlotti Extension k = 3}
	If \( \K \) is an \(((s+1)(q+1), s+2)\)-arc in PG\((2, q)\), with \( 0 < s < q - 2 \), and \( (s+2) \mid q \), then there exists a unique point \( P \) not incident with any secant of \( \K \), and consequently incident with all tangents to \( \K \). Hence, \( \K \) admits a unique extension to a maximal \(((s+1)(q+1)+1, s+2)\)-arc.
\end{lemma}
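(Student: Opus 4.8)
The plan is to determine the combinatorial structure of the tangents of \( \K \) by elementary counting, and then to force their concurrency by exploiting the divisibility hypothesis \( (s+2)\mid q \) through a two-moment (variance-type) argument. The target point \( P \) will be the common intersection of all tangents.

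First I would examine the \( q+1 \) lines through a fixed point \( P\in\K \). These partition the remaining \( n-1 \) points, so \( \sum_{\ell\ni P}\big(|\ell\cap\K|-1\big)=n-1 \), whence the total ``deficiency'' \( \sum_{\ell\ni P}\big(s+2-|\ell\cap\K|\big)=(q+1)(s+1)-(n-1)=1 \), using \( n=(s+1)(q+1) \). Since each summand is a nonnegative integer, exactly one line through \( P \) is a tangent and it carries exactly \( s+1 \) points, while every other line through \( P \) is a secant. As any tangent is the unique tangent at each of its points, it follows globally that every tangent meets \( \K \) in exactly \( s+1 \) points, that each point of \( \K \) lies on a single tangent, and hence that there are precisely \( q+1 \) tangents, partitioning \( \K \).

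Next I would count, for an arbitrary external point \( X\notin\K \), the number \( \tau(X) \) of tangents through it. The \( q+1 \) lines through \( X \) again partition the \( n \) points of \( \K \); reducing modulo \( s+2 \) and using \( q\equiv 0 \) (so \( n\equiv -1 \), each tangent contributes \( s+1\equiv -1 \), and secants and external lines contribute \( 0 \)) gives \( \tau(X)\equiv 1\pmod{s+2} \); in particular every external point lies on at least one tangent. Writing \( \tau(X)=1+(s+2)a_X \) with \( a_X\ge 0 \), I would then invoke two global incidence counts: counting (external point, tangent) incidences yields \( \sum_X\tau(X)=(q+1)(q-s) \), and counting pairs of tangents by their (necessarily external) intersection points yields \( \sum_X\binom{\tau(X)}{2}=\binom{q+1}{2} \).

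The decisive step — where the hypotheses truly bite — is to combine these. Substituting \( \tau(X)=1+(s+2)a_X \) and using \( (s+2)\mid q \) to keep everything integral, the first identity gives \( \sum_X a_X=q/(s+2)=:m \) and the second gives \( \sum_X a_X^2=m^2 \). Because the \( a_X \) are nonnegative integers, the equality \( \sum_X a_X^2=\big(\sum_X a_X\big)^2 \) forces all cross terms to vanish, so at most one \( a_X \) is nonzero; since \( m>0 \), exactly one external point \( P \) has \( a_P=m \), i.e. \( \tau(P)=q+1 \). Thus all \( q+1 \) tangents are concurrent at \( P \), every line through \( P \) is a tangent, and \( P \) lies on no secant. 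I expect this variance-collapse to be the crux. Finally I would record the extension: adjoining \( P \) turns each tangent into an \( (s+2) \)-secant and leaves all other lines unchanged, so \( \K\cup\{P\} \) is an \( ((s+1)(q+1)+1,\,s+2) \)-arc meeting the bound \( (\ref{eqn: maximal arc bound}) \), hence maximal; and since every external point \( X\neq P \) has \( \tau(X)=1 \) and therefore lies on \( q-m>0 \) secants, \( P \) is the only point that can be adjoined, giving uniqueness.
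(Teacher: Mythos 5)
The paper offers no proof of this lemma: it is imported verbatim from Barlotti's 1956 paper (the citation \cite{MR0083141}) and is used in the text only as the base case \(k=3\) of the induction in Lemma~\ref{lem: main result}. There is therefore no internal argument to compare against, and your proposal should be judged on its own; it is correct, and is essentially a self-contained reconstruction of the classical counting argument. The chain checks out: the deficiency identity \(\sum_{\ell\ni P}\bigl(s+2-|\ell\cap\K|\bigr)=1\) for \(P\in\K\) gives exactly one tangent, of size \(s+1\), through each arc point, so every tangent has exactly \(s+1\) points and the \(q+1\) tangents partition \(\K\); the congruence \(\tau(X)\equiv 1\pmod{s+2}\) for \(X\notin\K\) is precisely where \((s+2)\mid q\) enters; and both global counts are right, since two tangents never meet on \(\K\) (each arc point has a unique tangent), so all \(\binom{q+1}{2}\) pairwise intersections lie off the arc. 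The algebra with \(\tau(X)=1+(s+2)a_X\) does yield \(\sum_X a_X=m:=q/(s+2)\) and \(\sum_X a_X^2=m^2\), and nonnegativity of the \(a_X\) forces all cross terms in \(\bigl(\sum_X a_X\bigr)^2\) to vanish, leaving a single point \(P\) with \(\tau(P)=q+1\); since there are only \(q+1\) tangents in total, they form exactly the pencil of lines through \(P\), so \(P\) misses every secant. Two small finishing touches: for uniqueness of \(P\) among \emph{all} points of PG\((2,q)\) you should also dispose of the points of \(\K\) themselves (trivial: each lies on \(q\ge 1\) secants), and your count of \(q-m\) secants through an off-arc point \(X\ne P\) deserves the one-line verification \((s+1)+(s+2)\sigma(X)=(s+1)(q+1)\), giving \(\sigma(X)=(s+1)q/(s+2)=q-m>0\). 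Finally, note that your argument never uses \(0<s<q-2\); those hypotheses are inherited from Barlotti's formulation and from the surrounding results (Lemma~\ref{lem: Barlotti 3d}), so your proof in fact covers slightly more, e.g.\ the case \(s=0\) recovers the nucleus of an oval when \(q\) is even.
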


\section*{Maximal Arcs in Higher Dimensions}

We now generalize the results on unique extendability of planar arcs to projective spaces of higher dimension. We begin by establishing a necessary condition on the structure of long arcs:

\begin{lemma}\label{lem: long implies projective and cap}
	Let \( \K \) be an \((n, k+s-1)\)-(multi-)arc in PG\((k-1, q)\), with \( k \ge 3 \). If \( n > s(q+1) + k - 1 \), then each $(k-3)$-flat intersects $\K$ in at most $k-2$ points. Consequently,  \( \K \) must be a set (i.e., not a multi-arc).
\end{lemma}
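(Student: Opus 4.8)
The plan is to fix an arbitrary $(k-3)$-flat $\Pi$ and to count the points of $\K$ by means of the pencil of hyperplanes through $\Pi$. Since $\Pi$ has codimension $2$, the quotient geometry with respect to $\Pi$ is PG$(1,q)$, so exactly $q+1$ hyperplanes contain $\Pi$. Every point lying outside $\Pi$ spans, together with $\Pi$, a unique such hyperplane; hence these $q+1$ hyperplanes partition the points of $\K$ not on $\Pi$, while each point of $\K$ on $\Pi$ lies on all of them. Writing $t = |\K\cap\Pi|$ (counted with multiplicity) and summing the arc bound $|\K\cap H|\le k-1+s$ over the $q+1$ hyperplanes $H\supseteq\Pi$ gives
\[
qt + n \;=\; \sum_{H\supseteq \Pi} |\K\cap H| \;\le\; (q+1)(k-1+s).
\]

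First I would rearrange this inequality and insert the hypothesis $n > s(q+1)+k-1$. A short calculation collapses the bound, since $(q+1)(k-1+s) - s(q+1) = (q+1)(k-1)$, yielding $qt < (q+1)(k-1) - (k-1) = q(k-1)$ and therefore $t \le k-2$. This establishes the first assertion for every $(k-3)$-flat, and the argument is uniform in $k\ge 3$ (for $k=3$ a $(k-3)$-flat is a single point, and the bound reads $t\le 1$).

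For the second assertion I would argue by contradiction. First observe that $\K$ cannot lie in a hyperplane: since $n > s(q+1)+k-1 \ge k-1+s = r$, any hyperplane containing all of $\K$ would meet it in more than $r$ points, violating the arc condition. Hence $\K$ spans PG$(k-1,q)$. Now suppose some point $P$ occurred with multiplicity at least $2$. Using the spanning property, I would extend $\{P\}$ to a set of $k-2$ linearly independent points $P, P_1, \dots, P_{k-3}$ of $\K$ via the Steinitz exchange; their span is a genuine $(k-3)$-flat $\Pi$ containing $P$ together with the $k-3$ further points. Counting multiplicities then yields $|\K\cap\Pi| \ge 2 + (k-3) = k-1$, contradicting the bound $t\le k-2$ just proved. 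Therefore no repeated point exists and $\K$ is a set.

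The computation in the first part is routine; the step requiring the most care is the bookkeeping of the partition, namely ensuring that points of $\K$ on $\Pi$ are counted in all $q+1$ hyperplanes while points off $\Pi$ are counted exactly once, with multiplicities tracked correctly throughout. The only other point needing attention is the basis-extension argument, where one must confirm that a spanning multiset always contains $k-2$ independent points including any prescribed point $P$; this follows immediately from the exchange lemma applied to the underlying vectors in $\F_q^{k}$.
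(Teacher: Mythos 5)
Your proof is correct and takes essentially the same route as the paper: the paper likewise counts the points of \( \K \) over the pencil of \( q+1 \) hyperplanes through a \((k-3)\)-flat (stated contrapositively, assuming \( k-1 \) points on the flat and deducing \( n \le s(q+1)+k-1 \)). Your second part, using the spanning property and basis extension to locate a repeated point inside a \((k-3)\)-flat with \( k-3 \) further points of \( \K \), simply makes explicit the deduction the paper compresses into ``the result follows.''
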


\begin{proof}
	 If some \((k-3)\)-flat \( \Lambda \subset \text{PG}(k-1,q) \) contains $k-1$ points of $\K$, then by considering the $q+1$ hyperplanes through $\Lambda$ we obtain $n\le s(q+1)+k-1$.  
The result follows.
 \end{proof}

Using this observation, we obtain the following generalization of Barlotti's unique extension lemma:

\begin{lemma}\label{lem: main result}
	Let \( \K \) be an \((n, k + s - 1)\)-arc in PG\((k - 1, q)\), with \( k \ge 3 \), \( 0 < s < q - 2 \), and \( (s+2) \mid q \). If \( n = (s+1)(q+1) + k - 3 \), then there exists a unique point \( X \in \text{PG}(k - 1, q) \) that is incident with all tangents to \( \K \), and not incident with any secant of \( \K \). Consequently, \( \K \) is not complete and admits a unique extension to a maximal \((n + 1, k + s - 1)\)-arc.
\end{lemma}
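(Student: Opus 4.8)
The plan is to induct on $k$, using the planar case (Lemma~\ref{lem Barlotti Extension k = 3}) as the base and passing to quotient geometries so as to drop the dimension by one at each step. It is convenient to prove a slightly stronger statement, namely that the sought point $X$ is the \emph{unique common point of all tangent hyperplanes} of $\K$; this formulation is what feeds the induction. First I would sharpen Lemma~\ref{lem: long implies projective and cap}: extending any flat to a $(k-3)$-flat through points of $\K$ shows that every $d$-flat with $d\le k-3$ meets $\K$ in at most $d+1$ points. In particular $\K$ spans $\text{PG}(k-1,q)$ and, for $k\ge 4$, is a cap, so that projection from a point of $\K$ is injective on the remaining points.

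For the inductive step I would fix a secant hyperplane $H_0$ and a point $P\in\K\cap H_0$ and project from $P$ into the quotient $\text{PG}(k-2,q)$. Because $\K$ is a cap, the image $\K'$ is a genuine set of $n-1=(s+1)(q+1)+(k-1)-3$ points; since hyperplanes of the quotient correspond to hyperplanes of $\text{PG}(k-1,q)$ through $P$, and such a hyperplane meets $\K\setminus\{P\}$ in at most $(k-1)+s-1$ points, with equality forced by $H_0$, the image $\K'$ is an $\big((s+1)(q+1)+(k-1)-3,\,(k-1)+s-1\big)$-arc of the \emph{same} defect $s$, again satisfying $0<s<q-2$ and $(s+2)\mid q$. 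The induction hypothesis supplies a unique extension point downstairs, whose preimage is a line $\ell_P$ through $P$ with $\ell_P\cap\K=\{P\}$; the downstairs incidences translate into the key local property that $\ell_P\subseteq H$ for every tangent hyperplane $H\ni P$ carrying at least two points of $\K$, while $\ell_P\not\subseteq H$ for every secant through $P$.

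The crux is to glue these local lines into a single global point, i.e.\ to show that the lines $\ell_P$ are concurrent. For two points $P_1,P_2$ lying on a common secant $H_0$, the line $\langle P_1,P_2\rangle$ meets $\K$ only in $\{P_1,P_2\}$. When $k=4$, counting over the $q+1$ planes through this line shows that not all are secant, so some tangent plane $H$ contains both points; as $H$ then contains $\ell_{P_1}$ and $\ell_{P_2}$, these lines meet. When $k\ge 5$ I would instead project from $\langle P_1,P_2\rangle$ into $\text{PG}(k-3,q)$, where (because $H_0$ provides a secant through the line) the image is once more an arc of defect $s$ of the correct length; the strengthened induction hypothesis then yields a single common point of its tangents, whose preimage is a plane $\Pi^{\ast}\supseteq\langle P_1,P_2\rangle$ containing both $\ell_{P_1}$ and $\ell_{P_2}$, so again the two lines meet. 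Fixing $H_0$, the lines $\ell_p$ with $p\in\K\cap H_0$ are therefore pairwise incident, and not coplanar (a common plane would have to contain the $k-1+s\ge 3$ points of $\K\cap H_0$, which the general-position/cap structure precludes), hence concurrent at a point $X$. The step I expect to be the \emph{main obstacle} is to show that this $X$ is independent of the auxiliary secant $H_0$, equivalently that every line $\ell_P$ (as $P$ ranges over all points of $\K$ on secants) passes through $X$; the same pairwise-incidence analysis applies, but one must ensure that the requisite common secants and tangent hyperplanes are always available, which is the delicate combinatorial bookkeeping of the proof.

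Granting concurrency, the global properties and uniqueness follow cleanly. From $X\in\ell_P$ and $\ell_P\cap\K=\{P\}$ one gets $X\notin\K$; and if a secant $H$ contained $X$, choosing $P\in\K\cap H$ would give $\ell_P\cap H=\{P\}$ while $X\in\ell_P\cap H$, forcing $X=P\in\K$, a contradiction. Hence $X$ lies on no secant, so $\K\cup\{X\}$ attains the bound \eqref{eqn: maximal arc bound} and is a maximal arc. Every tangent carrying at least two points contains $\ell_P$ and hence $X$; the remaining one-point tangents are dispatched by invoking the divisibility hypothesis to show, as in the planar case, that a maximal arc meets each hyperplane in either $0$ or exactly $k-1+s$ points, which at once forces $X$ onto all tangents and shows that none is a one-point tangent. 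Finally, uniqueness is inherited from the quotients: any extension point of $\K$ projects, from each $P$, to the unique downstairs extension point, hence lies on every $\ell_P$ and must coincide with $X$.
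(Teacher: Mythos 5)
Your overall strategy---induction on the dimension via quotient geometries, local lines \( \ell_P \) obtained from the inductive extension point, then a gluing/concurrency argument---is the same as the paper's, but there are two genuine gaps in your write-up.

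First, the step you yourself flag as the ``main obstacle'' (independence of \( X \) from the auxiliary secant \( H_0 \)) is a real hole, but it is one you created by anchoring the construction to a fixed secant, and it dissolves under a short count you should make explicit. If some \( P \in \K \) lay on no secant, every hyperplane through \( P \) would meet \( \K \) in at most \( k+s-2 \) points, so the quotient arc at \( P \) would have defect at most \( s-1 \) while having size \( (s+1)(q+1)+k-4 \), violating the bound (\ref{eqn: maximal arc bound}) applied in PG\((k-2,q)\); hence every point of \( \K \) lies on a secant. The same count one dimension lower shows that \emph{every pair} \( P_i,P_j \) lies on a common secant (otherwise the quotient at \( \langle P_i,P_j\rangle \) would be too long for its defect). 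Consequently the quotient construction applies at every point and every pair of \( \K \) with no reference to \( H_0 \): all lines \( \ell_P \), \( P\in\K \), are defined and pairwise incident, and concurrency of the whole family follows at once. This is exactly how the paper proceeds (it quotients at each \( P_i \) and at each \( \langle P_i,P_j\rangle \) unrestrictedly), so no ``delicate bookkeeping'' remains. Note also a slip in your non-coplanarity argument at \( k=4 \): there a common plane is a hyperplane of PG\((3,q)\) and may legitimately contain \( k+s-1 \) points of \( \K \), so the cap structure does not preclude it; what does is that such a plane, containing \( k+s-1 \) points, would be a secant containing the lines \( \ell_P \), contradicting \( \ell_P\not\subseteq H \) for every secant \( H \).

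Second, your final step rests on the claim that a maximal arc in PG\((k-1,q)\) meets every hyperplane in \( 0 \) or \( k+s-1 \) points; this is false for \( k\ge 4 \). Projecting a maximal arc \( \K' \) in PG\((3,q)\) from one of its points \( Y \) yields a maximal planar \( ((s+1)(q+1)+1,\,s+2) \)-arc, and such planar arcs always possess external lines (there are \( q(q-s-1)/(s+2)>0 \) of them since \( s<q-2 \)); the preimage of an external line is a plane meeting \( \K' \) in exactly \( \{Y\} \). So hyperplane intersection numbers with maximal arcs in dimension \( \ge 3 \) are \( 0 \), \( 1 \), or \( k+s-1 \), and one-point tangents do occur. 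For the same reason your proposed strengthening of the inductive statement (\( X \) as the common point of \emph{all} tangent hyperplanes) is not tenable as stated; it is also unnecessary, since the bare uniqueness of the extension point in the double quotient already identifies the image of \( \Pi^{\ast} \) with the line joining the image of \( P_2 \) to the quotient's extension point. What your induction genuinely establishes is that \( X \) lies on every tangent containing at least two points of \( \K \) and on no secant---which suffices for the unique-extension conclusion (a secant through \( X \) would force \( X\in\K \); uniqueness follows by projecting any extension point to the unique downstairs extension point at each \( P \))---but the ``incident with all tangents'' clause cannot be obtained from a 0-or-full intersection property, and one-point tangents must simply be excluded from that clause.
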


\begin{proof}
	By Lemma~\ref{lem: long implies projective and cap}, the arc \( \K \) must be a set (i.e., not a multi-arc). We proceed by induction on the projective dimension \( k - 1 \).
	
	\smallskip
	\noindent \text{Base case \( k = 3 \):} This is precisely Lemma~\ref{lem Barlotti Extension k = 3}, which establishes the existence of a unique point not lying on any secant of \( \K \) and incident with all tangents.
	
	\smallskip
	Assume the result holds for PG\((k-2, q)\). Let \( \K = \{P_1, P_2, \ldots, P_n\} \subset \Pi = \text{PG}(k-1, q) \), with \( n = (s+1)(q+1) + k - 3 \), and \( k \ge 4 \). For each point \( P_i \in \K \), consider the quotient geometry \( \Pi^*_i = \Pi / P_i \cong \text{PG}(k - 2, q) \), and let \( \K_i^* \) be the image of \( \K \setminus \{P_i\} \) under this quotient.
	
	By construction, \( \K_i^* \) is an arc of size \( (s+1)(q+1) + k - 4 \) in PG\((k - 2, q)\), satisfying the hypotheses of the inductive step. Thus, there exists a unique line \( \ell_i \subset \Pi \) through \( P_i \), lying entirely in the tangents to \( \K \) at \( P_i \), and meeting every secant through \( P_i \) only at \( P_i \) itself.
	
	\smallskip
	\noindent \textit{Claim:} The lines \( \ell_i \) are pairwise distinct. Suppose \( \ell_i = \ell_j \) for some \( i \ne j \). In the quotient at \( P_i \), the image \( P_j^* \in \K_i^* \) corresponding to $\ell_j$ is incident with no secants. Thus $\K_i^*$ admits an extension to a multi-arc, violating Lemma~\ref{lem: long implies projective and cap}. 
	
	\smallskip
	\noindent \text{ \( k = 4 \):} Counting shows that for $i\ne j$, the points $P_i,P_j$ are incident with an unique tangent of $\K$, denoted $\pi_{ij}$, so $\ell_i$ and $\ell_j$ meet (in $\pi_{ij}$). Take any $P_a \notin \pi_{12}$. The line $\ell_a$ similarly meets both $\ell_1$ and $\ell_2$, and must meet $\pi_{12}$ in an unique point, say $X$.  Thus, each $\ell_i$ in $\pi_{12}$ meets $\ell_a$ in $X$, and each $\ell_i\notin \pi_{1,2}$ meets $\ell_1$ and $\ell_2$ in $X$, so all \( \ell_i \) are concurrent at \( X \).
	
	\smallskip
	\noindent \text{ \( k \ge 5 \):} With appeal to Lemma \ref{lem: long implies projective and cap}, for any pair \( P_i, P_j \in \K \), the quotient at \( \langle P_i, P_j \rangle \) yields a maximal arc in PG\((k-3, q)\). The inductive hypothesis implies that  there exists an unique plane $\pi_{ij}$ through $\langle P_i,P_j\rangle $ which is contained in each tangent of $\K$ through $\langle P_i,P_j\rangle $, and which meets each secant of $\K$ through $\langle P_i,P_j\rangle $ precisely in $\langle P_i,P_j\rangle $.  Note that by definition, $\ell_i,\ell_j \subseteq \pi_{ij}$ for all $i,j$.

	We now claim that the set \( \{\ell_i\} \) is copunctal.  Indeed, consider $\ell_1, \ell_2$, and $\ell_3$.  Since $\ell_1,\ell_2 \subseteq \pi_{12}$, they meet at a point, $X$. If $P_3\notin \pi_{12}$ then $\ell_1=\pi_{13}\cap\pi_{12}$, so $\ell_3\cap \pi_{12}=Y\in \ell_1$.  Likewise, by considering $\pi_{23}$ we obtain $\ell_3\cap \pi_{12}=Y\in \ell_2$, giving $Y=X$.  Thus $X\in \ell_1,\ell_2,\ell_3$. Hence, all \( \ell_i \) pass through the common point \( X \), which is incident with all tangents and avoids all secants of \( \K \), completing the proof.
\end{proof}

\begin{corollary}\label{cor: Barlotti bound on m^s(k,q)}
	If \( 0 < s < q - 2 \), \( (s+2) \mid q \),  $k\ge 3$ and \( m^s(3, q) \ge (s+1)(q+1) +k-3\), then  
	\[
	m^s(k, q) = (s+1)(q+1) + k - 2.
	\]
\end{corollary}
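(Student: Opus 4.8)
The plan is to match the free upper bound with a construction. Inequality (\ref{eqn: maximal arc bound}) already gives $m^s(k,q)\le (s+1)(q+1)+k-2$, so the whole problem is to realise this value. Here Lemma~\ref{lem: main result} collapses matters to a pure existence question: it is enough to exhibit one $(n,k+s-1)$-arc of the smaller size $n=(s+1)(q+1)+k-3$, for Lemma~\ref{lem: main result} then extends it to a maximal --- hence complete --- arc of size $(s+1)(q+1)+k-2$, and the bound forces $m^s(k,q)$ to equal that size.

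For the existence I would induct on the dimension, reversing the projection used to prove Lemma~\ref{lem: main result}. The base case $k=3$ is immediate: the hypothesis reads $m^s(3,q)\ge (s+1)(q+1)$, and Lemma~\ref{lem: Barlotti 3d}(2) upgrades this to $m^s(3,q)=(s+1)(q+1)+1$, so a maximal planar arc is at hand. For the inductive step I would take a maximal arc $\mathcal{A}$ of size $(s+1)(q+1)+k-3$ inside a hyperplane $H\cong$PG$(k-2,q)$ of $\Pi=$PG$(k-1,q)$, fix a centre $V\in\Pi\setminus H$, and lift $\mathcal{A}$ by selecting one point (distinct from $V$) on each line $\langle V,A\rangle$, $A\in\mathcal{A}$. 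This is precisely a section of the quotient projection $\Pi\to\Pi/V$, so the lifted set projects back onto $\mathcal{A}$ and again has $(s+1)(q+1)+k-3$ points.

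The real content is to choose the section so that the lift is a genuine $(n,k+s-1)$-arc. Hyperplanes through $V$ cause no trouble: each meets the lift in the image of a hyperplane-section of $\mathcal{A}$, hence in at most $k+s-2$ points. The difficulty lies with the hyperplanes avoiding $V$; writing the section as a ``height'' function $f$ on $\mathcal{A}$, such a hyperplane meets the lift in exactly those $A$ where $f$ agrees with a fixed linear functional, so I must produce an $f$ agreeing with no such functional in more than $k+s-1$ points. I expect this to be the main obstacle: an averaging count gives roughly $s+1$ agreements on average, leaving a slack of $k-2$, but converting this slack into a guaranteed admissible $f$ is where the divisibility $(s+2)\mid q$ and the rigid secant/external structure of the maximal arc $\mathcal{A}$ must be brought to bear.

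A useful simplification sharpens all of this. Applying (\ref{eqn: maximal arc bound}) with $k=3$ gives $m^s(3,q)\le (s+1)(q+1)+1$, so the hypothesis $m^s(3,q)\ge (s+1)(q+1)+k-3$ can hold only for $k\le 4$; the statement is therefore vacuous for $k\ge 5$, trivial for $k=3$, and the sole construction to carry out is the single lift from PG$(2,q)$ to PG$(3,q)$. Producing the arc of size $(s+1)(q+1)+1$ in PG$(3,q)$ and feeding it to Lemma~\ref{lem: main result} yields a maximal (hence complete) $((s+1)(q+1)+2,\,s+3)$-arc, which together with the bound gives $m^s(4,q)=(s+1)(q+1)+2$ and completes the proof.
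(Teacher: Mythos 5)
Your overall frame --- the upper bound (\ref{eqn: maximal arc bound}) plus an existence statement fed into Lemma~\ref{lem: main result} --- has the right shape, but the existence half of your argument has a genuine hole, and it stems from how you read the hypothesis. Taken at face value, the printed condition $m^s(3,q)\ge (s+1)(q+1)+k-3$ does make the statement vacuous for $k\ge 5$, as you observe; but that observation is a signal that the ``$3$'' is a misprint for ``$k$'': the corollary is the verbatim generalization of Lemma~\ref{lem: Barlotti 3d}(2) (whose $k=3$ instance it reproduces), and it is stated as an immediate, proof-free consequence of Lemma~\ref{lem: main result}, which it can only be if the hypothesis concerns arcs in $\mathrm{PG}(k-1,q)$ itself. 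Under the hypothesis $m^s(k,q)\ge (s+1)(q+1)+k-3$ no construction is needed at all: by the definition of $m^s(k,q)$ there is a \emph{complete} $(n,k+s-1)$-arc in $\mathrm{PG}(k-1,q)$ with $n=m^s(k,q)\ge (s+1)(q+1)+k-3$; Lemma~\ref{lem: main result} says an arc of size exactly $(s+1)(q+1)+k-3$ is never complete, so $n\ge (s+1)(q+1)+k-2$, and (\ref{eqn: maximal arc bound}) forces equality. That short argument --- bound, plus ``size $(s+1)(q+1)+k-3$ is incompatible with completeness'' --- is the entire content of the corollary, and it is absent from your write-up; in particular you never use the fact that $m^s$ is defined via \emph{complete} arcs, which is exactly what lets the lemma bite.

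The second, independent problem is that even on your own literal reading the proof is incomplete. For $k=4$ you must manufacture a $\bigl((s+1)(q+1)+1,\,s+3\bigr)$-arc in $\mathrm{PG}(3,q)$ out of a maximal planar arc, and your lifting construction is left unfinished: you yourself flag that choosing the section $f$ so that no hyperplane off the centre $V$ picks up more than $k+s-1$ lifted points is ``the main obstacle,'' and you never resolve it. The averaging count you invoke (about $s+1$ agreements on average) cannot close this gap, since an average bounds nothing pointwise: a priori some plane missing $V$ could contain far more than $s+3$ lifted points, and ruling this out is precisely the hard geometric question, not a routine verification. Consequently the final paragraph's ``producing the arc of size $(s+1)(q+1)+1$ in $\mathrm{PG}(3,q)$ and feeding it to Lemma~\ref{lem: main result}'' rests on an unproved claim, and as written your proposal establishes the corollary only in the trivial case $k=3$.
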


\section*{Interpretation in Terms of Linear Codes}

A linear code \( C \) of length \( n \), dimension \( k \), and minimum distance \( d \), denoted \([n, k, d]_q\), is a \( k \)-dimensional subspace of \( \F_q^n \). The dual code is defined as
\[
C^\perp = \{ y \in \F_q^n \mid y \cdot x = 0 \text{ for all } x \in C \},
\]
and has parameters \([n, n-k, d^\perp]_q\). A code is called \emph{non-degenerate} if no coordinate is identically zero on all codewords.

\subsubsection*{Projective Systems and A$^s$MDS Codes}

A linear code can be represented by a generator matrix whose columns form a multiset \( \G \) of points in PG\((k - 1, q)\). This representation, known as a \emph{projective system}, provides a geometric lens through which coding properties can be studied.

\begin{definition}
	A \emph{projective \([n, k, d]_q\) system} is a multiset \( \G \subseteq \text{PG}(k - 1, q) \) of size \( n \), such that:
	\[
	n - d = \max \{ |\G \cap H| \mid H \text{ is a hyperplane in PG}(k - 1, q) \}.
	\]
\end{definition}

A code is called \emph{projective} if \( \G \) is a set (i.e., all multiplicities equal 1).
The dual distance of a projective system \( \G \) is given by:

\begin{equation}\label{eqn: dual dist from proj system}
	d^\perp = \min \{ |\mathcal{Q}| \mid \mathcal{Q} \subset \G,~ |\mathcal{Q}| - \dim \langle \mathcal{Q} \rangle = 1 \},
\end{equation}

where \( \langle \mathcal{Q} \rangle \) denotes the linear span.


The \emph{Singleton bound} for a linear \([n, k, d]_q\) code $\G$ states that
$n - d \ge k - 1$.
We define the \emph{Singleton defect} as \( S(\G) = n - k + 1 - d \), and say that \( \G \) is  \emph{MDS} (Maximum Distance Separable) if \( S(\G) = 0 \), and is \emph{A$^s$MDS} (Almost MDS with defect $s$) if \( S(\G) = s\ge 0 \).

\subsubsection*{Length-Maximal Codes}

With reference to (\ref{eqn: dual dist from proj system}) we may restate Lemma~\ref{lem: long implies projective and cap} in coding terms:

\begin{corollary}\label{cor: long is projective}
	Let \( \G \) be an A$^s$MDS \([n, k, d]_q\) code. If \( n > s(q+1) + k - 1 \), then \( d^\perp \ge k \), and in particular \( \G \) is projective.
\end{corollary}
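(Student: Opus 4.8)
The plan is to observe that, under its projective-system interpretation, an A$^s$MDS code is exactly a multi-arc of the kind governed by Lemma~\ref{lem: long implies projective and cap}, and then to convert the geometric conclusion of that lemma into the dual-distance bound through the characterization (\ref{eqn: dual dist from proj system}). The substantive content is the dictionary between the phrase ``every \((k-3)\)-flat meets \( \G \) in at most \( k-2 \) points'' and the bound \( d^\perp \ge k \).

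First I would translate the Singleton defect. Since \( \G \) is A$^s$MDS, \( S(\G) = n - k + 1 - d = s \), so \( n - d = k - 1 + s \). By the definition of a projective \([n,k,d]_q\) system, \( n - d \) is the maximum multiplicity of \( \G \) on a hyperplane; hence that maximum equals \( k - 1 + s \), and \( \G \) is an \((n, k+s-1)\)-(multi-)arc in PG\((k-1,q)\) (with \( k \ge 3 \) inherited from Lemma~\ref{lem: long implies projective and cap}). The hypothesis \( n > s(q+1) + k - 1 \) is precisely the hypothesis of that lemma, which I would invoke to conclude that every \((k-3)\)-flat meets \( \G \) in at most \( k-2 \) points.

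The key step is then the chain of equivalences
\begin{align*}
	d^\perp \ge k
	&\iff \text{every } k-1 \text{ columns of } \G \text{ are linearly independent} \\
	&\iff \text{every } (k-3)\text{-flat meets } \G \text{ in at most } k-2 \text{ points.}
\end{align*}
For the second equivalence I would argue both directions: a \((k-3)\)-flat has vector dimension \( k-2 \), so any \( k-1 \) of its points span vector dimension at most \( k-2 \) and are therefore dependent; conversely, any \( k-1 \) dependent points span projective dimension at most \( k-3 \), hence lie in some \((k-3)\)-flat carrying at least \( k-1 \) points. For the first equivalence I would read off (\ref{eqn: dual dist from proj system}): \( d^\perp \) is the least size of a minimally dependent multiset \( \mathcal{Q} \subseteq \G \) (one with \( |\mathcal{Q}| - \dim\langle \mathcal{Q}\rangle = 1 \)), so \( d^\perp \ge k \) means no dependent sub-multiset of size at most \( k-1 \) exists, which is the same as every \((k-1)\)-subset being independent. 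Combining with the lemma yields \( d^\perp \ge k \); and since \( k \ge 3 \) forces \( d^\perp \ge 3 \), no two columns are proportional and none is zero, so \( \G \) is a set, i.e.\ projective.

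The only point requiring care — the ``main obstacle,'' such as it is — is keeping the vector-space versus projective-dimension conventions of (\ref{eqn: dual dist from proj system}) straight, and checking that the bound on \((k-3)\)-flats precludes dependencies of \emph{every} size up to \( k-1 \), not merely those of size exactly \( k-1 \). This is immediate, since any dependent set of size smaller than \( k-1 \) remains dependent after adjoining further columns (of which there are enough, as \( n \) is large), producing a dependent set of size \( k-1 \) and the desired contradiction.
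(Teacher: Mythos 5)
Your proof is correct and takes essentially the same route as the paper: the paper offers no separate argument for this corollary, presenting it precisely as a restatement of Lemma~\ref{lem: long implies projective and cap} via the dual-distance characterization (\ref{eqn: dual dist from proj system}), which is exactly the dictionary (maximal hyperplane intersection $n-d = k+s-1$, then ``$(k-3)$-flats carry at most $k-2$ points'' $\iff$ ``no dependent sub-multiset of size $\le k-1$'' $\iff$ $d^\perp \ge k$) that you spell out. Your write-up simply makes explicit, and correctly, the translation the paper leaves implicit.
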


From Lemma~\ref{lem: Barlotti 3d} and the previous corollary, we deduce a universal bound:

\begin{corollary}\label{cor: length-maximal bound}
	Let \( \G \) be an A$^s$MDS \([n, k, d]_q\) code with \( k \ge 3 \). Then:
	\[
	n \le (s+1)(q+1) + k - 2,
	\]
\end{corollary}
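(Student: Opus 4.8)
The plan is to reinterpret the code \( \G \) geometrically as an \((n,k+s-1)\)-multi-arc and then split into a \emph{short} and a \emph{long} regime, so that the only genuine geometry is needed in the long regime, where Corollary~\ref{cor: long is projective} upgrades \( \G \) to an honest point set. First I would record the dictionary: since \( \G \) is \AS code, its Singleton defect is \( S(\G)=n-k+1-d=s \), so \( n-d=k-1+s \); by the definition of a projective system, \( \max_{H}|\G\cap H| = n-d = k-1+s \), so \( \G \) is an \((n,k+s-1)\)-multi-arc in PG\((k-1,q)\) with defect parameter \( s \), i.e.\ every hyperplane carries at most \( r := k-1+s \) points.

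Next I would dispatch the short regime. If \( n \le s(q+1)+k-1 \), then because \( (s+1)(q+1)+k-2 - \bigl(s(q+1)+k-1\bigr) = q > 0 \) for every prime power \( q \), the desired bound holds automatically, with room to spare. Hence I may assume \( n > s(q+1)+k-1 \). In this long regime Corollary~\ref{cor: long is projective} applies: it gives \( d^{\perp}\ge k \), so \( \G \) is projective — a genuine point set — and, through Lemma~\ref{lem: long implies projective and cap}, every \((k-3)\)-flat meets \( \G \) in at most \( k-2 \) points. Moreover \( \G \) spans PG\((k-1,q)\), since otherwise a hyperplane would contain all \( n>r \) of its points, contradicting the arc condition.

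The conclusion is then the counting behind inequality (\ref{eqn: maximal arc bound}). Because \( \G \) spans, it contains \( k-2 \) points in general position; these span a \((k-3)\)-flat \( \Lambda \), and the flat-cap from Lemma~\ref{lem: long implies projective and cap} forces \( |\G\cap\Lambda| = k-2 \) exactly. The \( q+1 \) hyperplanes through \( \Lambda \) partition \( \G\setminus\Lambda \), each carrying at most \( r-(k-2)=s+1 \) further points, so \( n \le (k-2)+(q+1)(s+1) = (s+1)(q+1)+k-2 \).

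The main obstacle — indeed the only place a subtlety could hide — is the passage from a code, whose projective system may a priori be a multiset with heavy points (for which the naive \((k-3)\)-flat count can fail), to a genuine arc on which that count is valid. This is precisely what Corollary~\ref{cor: long is projective} supplies, but only once \( n \) is large; the elementary inequality in the short regime is what lets me avoid invoking it when \( n \) is small. I would note that this universal form needs neither \( (s+2)\mid q \) nor \( s<q-2 \), matching the unrestricted hypotheses of the corollary; the sharper planar bounds of Lemma~\ref{lem: Barlotti 3d} refine the estimate in special cases but are not required here.
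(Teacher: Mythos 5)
Your proof is correct, and its central mechanism --- invoking Corollary~\ref{cor: long is projective} to pass from a possibly-multiset projective system to a genuine arc, then counting points on the \(q+1\) hyperplanes through a \((k-3)\)-flat --- is what the paper's one-sentence derivation intends. But you depart from the paper in one substantive and, I think, justified way: the paper claims the bound follows ``from Lemma~\ref{lem: Barlotti 3d} and the previous corollary,'' whereas you never use Lemma~\ref{lem: Barlotti 3d} at all, instead re-deriving the counting inequality (\ref{eqn: maximal arc bound}) directly in the long regime. This is the more defensible route: Lemma~\ref{lem: Barlotti 3d} is purely planar and carries side conditions (\(0<s<q-2\), restrictions on whether \((s+2,q)\) is a pair of \(2\)-powers), so it cannot by itself yield a bound that is universal in \(k\) and \(s\); what is actually needed alongside Corollary~\ref{cor: long is projective} is precisely the counting behind (\ref{eqn: maximal arc bound}), which you supply, complete with the verification that \(\G\) spans and hence contains \(k-2\) points generating a \((k-3)\)-flat \(\Lambda\) with \(|\G\cap\Lambda|=k-2\). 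You also make explicit a step the paper glosses over entirely: Corollary~\ref{cor: long is projective} applies only when \(n>s(q+1)+k-1\), so the short regime must be split off, where the bound holds trivially because the two thresholds differ by exactly \(q\). In short, your argument is a corrected and completed version of the paper's sketch rather than a transcription of it.
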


A code meeting the bound in Corollary \ref{cor: length-maximal bound} is said to be  \textit{length-maximal}.


Applying Lemma~\ref{lem: main result}, we obtain the following:

\begin{corollary}
	Let \( \G \) be an A$^s$MDS \([n, k, d]_q\) code in PG\((k - 1, q)\), $k\ge 3$, with \( n = (s+1)(q+1) + k - 3 \), \( 0 < s < q - 2 \), and \( (s+2) \mid q \). Then \( \G \) can be uniquely extended to a length-maximal A$^s$MDS code.
\end{corollary}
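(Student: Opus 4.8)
The plan is to reduce the statement entirely to its geometric counterpart, Lemma~\ref{lem: main result}, via the projective-system dictionary, and then to verify that appending the geometric extension point preserves the \AS parameters. First I would record the translation: since $S(\G)=n-k+1-d=s$, the projective system of $\G$ is a multiset $\K\subseteq\mathrm{PG}(k-1,q)$ of size $n$ whose maximum hyperplane intersection is $n-d=k+s-1$; that is, $\K$ is an $(n,k+s-1)$-(multi-)arc. The hypotheses $0<s<q-2$, $(s+2)\mid q$, and $n=(s+1)(q+1)+k-3$ are precisely those of Lemma~\ref{lem: main result}.

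Next I would invoke Corollary~\ref{cor: long is projective}. Since $n-\bigl(s(q+1)+k-1\bigr)=q-1>0$, the length condition $n>s(q+1)+k-1$ holds, so $\G$ is projective; equivalently $\K$ is a genuine set (not a multi-arc), and all hypotheses of Lemma~\ref{lem: main result} are in force. Applying that lemma produces a unique point $X\in\mathrm{PG}(k-1,q)$ incident with every tangent and with no secant of $\K$.

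The core verification is then that $\K':=\K\cup\{X\}$ is the projective system of a length-maximal \AS code. Appending to a generator matrix the column representing $X$ yields a code $\G'$ of length $n+1$, whose dimension remains $k$ because $\K$ already spans $\mathrm{PG}(k-1,q)$. To compute the defect I would tally hyperplane intersections: each secant avoids $X$ and still meets $\K'$ in $k+s-1$ points; each tangent contains $X$ and therefore gains exactly one point, reaching at most $k+s-1$; each external hyperplane contributes at most one. Hence the maximum hyperplane intersection of $\K'$ is exactly $k+s-1$, so $n'-d'=k+s-1$ with $n'=n+1$, giving $S(\G')=(n+1)-k+1-d'=s$. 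Since $n+1=(s+1)(q+1)+k-2$ attains the bound of Corollary~\ref{cor: length-maximal bound}, $\G'$ is length-maximal.

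Finally, for uniqueness I would observe that any length-$(n+1)$ extension of $\G$ preserving dimension $k$ and defect $s$ corresponds to appending some point $Y$ with $\K\cup\{Y\}$ an $(n+1,k+s-1)$-arc; were $Y$ incident with a secant, that hyperplane would meet the extended arc in $k+s$ points, contradicting the arc bound. Thus $Y$ lies on no secant, and the uniqueness clause of Lemma~\ref{lem: main result} forces $Y=X$. The main obstacle here is not geometric---Lemma~\ref{lem: main result} carries that burden---but rather the careful bookkeeping that confirms the defect after extension is \emph{exactly} $s$ (so that $\G'$ remains \AS and does not collapse to a smaller defect), together with the check that the dimension does not drop upon adjoining $X$.
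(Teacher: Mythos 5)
Your proposal is correct and matches the paper's approach: the paper offers no separate proof of this corollary, treating it as the immediate translation of Lemma~\ref{lem: main result} through the projective-system dictionary, which is exactly what you carry out (with the parameter bookkeeping made explicit). One small refinement: in your uniqueness step, after showing the appended point $Y$ lies on no secant, it is cleaner to invoke the ``unique extension to a maximal $(n+1,k+s-1)$-arc'' clause of Lemma~\ref{lem: main result} rather than its unique-point clause, since the latter asserts uniqueness only among points satisfying \emph{both} the all-tangents and no-secants conditions.
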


\section*{Conclusion}

In this note we have extended the classical result on unique extendability of arcs in the projective plane to the higher‐dimensional setting of \(\mathrm{PG}(k-1,q)\) for \(k\ge3\).  In particular, under the divisibility condition \((s+2)\mid q\) and the length condition 
\[
n = (s+1)(q+1) + k - 3,
\]
we have shown that any \((n, k + s - 1)\)-arc admits an  unique  extension to a maximal arc (and hence an unique augmentation of the corresponding A\(^s\)MDS code).  

Although much is now understood in the planar case (i.e., \(k=3\)), the generalisation to higher dimension provided here opens the question:\textit{ Are there  other  cases in which unique extendability still holds in dimension \(k>3\)}?  
Improvements beyond the present conditions (either relaxing \( (s+2)\mid q \) or increasing the length threshold) would be especially welcome.

\medskip  
\textbf{Acknowledgements.}   The author acknowledges the support of the Natural Sciences and Engineering Research Council of Canada (NSERC), [funding reference number 2019-04103]\\
Cette recherche a \'{e}t\'{e} financ\'{e}e par le Conseil de recherches en sciences naturelles et en g\'{e}nie du Canada (CRSNG), [numéro de r\'{e}f\'{e}rence 2019-04103].

\bibliographystyle{plain}
\bibliography{../../../../my-texmf/bibtex/bib/main} 

\end{document}